\newtheorem{theorem}{Theorem}[section]
\newtheorem{lemma}[theorem]{Lemma}
\newtheorem{proposition}[theorem]{Proposition}
\newtheorem{corollary}[theorem]{Corollary}
\theoremstyle{definition}
\newtheorem{definition}[theorem]{Definition}
\theoremstyle{remark}
\newtheorem{remark}[theorem]{Remark}
\newtheorem{note}[theorem]{Note}
\numberwithin{equation}{section}
\newcommand{\te}{\text}
\begin{document}

\title[Weaker quantization dimension results for self-similar measures]{Weaker quantization dimension results for self-similar measures}



\author{Saurabh Verma}
\address{Department of Applied Sciences, IIIT Allahabad, Prayagraj, India 211015}
\email{saurabhverma@iiita.ac.in}
\thanks{The first author is supported by the SEED Grant Project of IIIT Allahabad, India. The second author is financially supported by the Ministry of Education, India.}

\author{Shivam Dubey}
\address{Department of Applied Sciences, IIIT Allahabad, Prayagraj, India 211015}
%
\email{rss2022509@iiita.ac.in}

\subjclass[2020]{Primary 28A80; Secondary 28A78}
\date{January 1, 1994 and, in revised form, June 22, 1994.}


\keywords{Separation condition, self-similar sets, invariant measures, Lower Assouad dimension, Quantization dimension}

\begin{abstract}
In this paper, we investigate the quantization dimension of self-similar measures, particularly when the IFS does not satisfy the separation condition, but the sub-IFS at some level satisfies the separation condition. Further, we study the approximation of the space of Borel probability measures $\Omega(\mathbb{R}^m)$ with respect to the geometric mean error, i.e., the quantization dimension of order zero.

\end{abstract}

\maketitle

\section{Introduction}
An important factor in the study of fractal geometry is the fractal dimension. It measures a fractal's scaling across various magnifications to provide information on geometric complexity; it translates \textit{irregularity} into an exact numerical quantity, the \textit{fractal dimension}, that interpolates between traditional geometric dimensions. Since the irregularity exhibited by the fractal is not always homogeneous, a single notion of the fractal dimension is not sufficient. There are various dimensions that have been introduced in the literature to capture the different aspects of the complexity, including the Hausdorff dimension $\dim_\mathcal{H}(\cdot),$ Minkowski–Bouligand dimension (box-counting dimension) $\dim_B(\cdot),$ packing dimension $\dim_P(\cdot),$ Assouad dimension $\dim_A(\cdot),$ lower Assouad dimension (lower dimension) $\dim_L(\cdot),$ quantization dimension $\dim_r(\cdot), ~ L^q-$ dimension, information dimension, correlation dimension, entropy dimension, etc. (see \cite{Fal, Fraser, KNZ1, Mat3, Sh1, Zador, KPot}). Some dimensions, such as the Hausdorff and box dimensions, capture global scaling complexity, while others, such as the Assouad and lower dimensions, describe local extremal behaviour. In the case of fractal measures, global notions such as the information dimension and the $L^q$-dimension, as well as local dimensions such as the local dimension and the multifractal spectrum, provide insight at the local level. 
The concept of \textit{quantization} was first derived in information theory and signal processing, where it refers to the approximation of a continuous signal by a finite set of discrete code points. Zadore \cite{Zador} traces this idea and derived the asymptotic formulas for the optimal quantization error at high rates of a probability distribution. In this context, a natural question is to determine the rate of convergence of the optimal-$n$ approximation error as $n$ approaches infinity. Building on these ideas, Graf and Luschgy \cite{GL1} developed a measure-theoretic framework for quantization of probability distribution and formalized the notion of \textit{quantization coefficient} and \textit{quantization dimension}. In general, calculating the fractal dimension exactly is a tedious task. It becomes easier when the cylinders have no or minimal overlap. There are various separation properties in the literature, such as \textit{strong separation condition (SSC)}, \textit{open set condition (OSC)}, \textit{strong open set condition (SOSC)}, and \textit{exponential separation condition (ESC)}.

\section{Preliminaries} \label{Sec2}
In this section, we recall some definitions and basic results for our paper. For any $F\subseteq\mathbb{R}^m,$ we denote $|F|=\sup_{x,y\in F}\|x-y\|_2$ and $\text{Card}(F)$ as diameter and cardinality of $F,$ respectively. \\
Let $\mathfrak{K}(\mathbb{R}^m)$ denote the collection of all compact non-empty subsets of $\mathbb{R}^m$ with the Euclidean norm $\|\cdot\|_2$. The Hausdorff distance $\mathfrak{H}$ between any $E,F\in\mathfrak{K}(\mathbb{R}^m)$ is elaborated as 
$$\mathfrak{H}_d(E,F):=\inf\{\epsilon>0: E\subset F_\epsilon \text{ and } F\subset E_\epsilon\},$$ where $E_\epsilon$ denotes the $\epsilon$-neighbourhood of $E.$
\begin{definition}\cite{GL1}
    Let $\mu$ be a Borel probability measure on $\mathbb{R}^m,~r \in (0, +\infty)$ and $n \in \mathbb{N}.$  Then, the $n$th quantization error of order $r$ of $\mu$ is stated as:
    \[V_{n, r}(\mu):=\text{inf} \Big\{\int d(x, A)^r d\mu(x): A \subset \mathbb{R}^m, \ \text{Card}(A) \leq n\Big\},\]
    where $d(x,A)$ denotes the distance of point $x \in \mathbb{R}^m$ to the set $A$ with respect to the Euclidean norm $\|\cdot\|_2$ on $\mathbb{R}^m.$
    The quantization dimension of order $r$ of $\mu$ is defined by:
    $$ D_r(\mu):= \lim_{n\to \infty} \frac{r \log n}{- \log(V_{n,r}(\mu))}.$$
    If the limit exists, otherwise, we define the lower and upper quantisation dimensions of order $r$ of $\mu$ by taking the limit inferior and limit superior of the sequence, respectively.
\end{definition}
\begin{definition}\cite{GL1}
    For a given $s>0,$ the $s$-dimensional lower and upper quantization coefficient of order $r$ for $\mu$ is defined by:
    $$\liminf_{n \to \infty} n^{\frac{r}{s}}V_{n,r}(\mu),~ \text{and}~ \limsup_{n \to \infty}  n^{\frac{r}{s}}V_{n,r}(\mu),$$
    respectively.
\end{definition}
Let $\Omega(\mathbb{R}^m)$ denote the set of all Borel probability measures on the Euclidean space $(\mathbb{R}^m,\|.\|_2)$. Then,
\begin{equation*}
\begin{aligned}
d_L (\mathfrak{\mu}, \mathfrak{\nu}) :=\sup_{h\in\te{Lip}_1(\mathbb{R}^m)} \Big\{\Big|\int_{\mathbb{R}^m} h d\mu -\int_{\mathbb{R}^m} hd\mathfrak{\nu}\Big|\Big\}, \ \ \quad \mu, \mathfrak{\nu} \in \Omega(\mathbb{R}^m) , \end{aligned}
\end{equation*}
defines a metric on $\Omega(\mathbb{R}^m)$, where $\te{Lip}_1(\mathbb{R}^m)=\{h:\mathbb{R}^m\rightarrow \mathbb{R}: h\te{ is Lipschitz function}\\ \te{with Lipschitz constant }\le1\}$ (see \cite{Mat,Parth}). If $(X,d)$ is a compact metric space, then
$(\Omega(X), d_L)$ is a compact metric space (see \cite[Theorem~5.1]{B}), also note that if $X$ is separable and complete then the space $\omega(\mathbb{R}^m)$ is complete.
\par



\begin{definition}
Let $\mu, \mathfrak{\nu} \in \Omega(\mathbb{R}^m).$ The convolution of $\mu$ and $\mathfrak{\nu}$ is denoted by $\mu *\mathfrak{\nu}$ and defined by the push-forward of the product measure $\mu \times \mathfrak{\nu}$ under the mapping $(x,y)\to x+y.$ That is, for any $E \subset \mathbb{R}^m$, we have $$\mu * \mathfrak{\nu} (E)= (\mu \times \mathfrak{\nu} )\big(\{(x,y) \in \mathbb{R}^m\times \mathbb{R}^m: x+y \in E\}\big).$$
\end{definition}
\begin{definition}
Let $\mu \in \Omega(\mathbb{R}^m)$ and $x \in \mathbb{R}^m.$ The translation of $\mu$ by $x$ is defined as 
\[
(\mu+x)(E)=\mu(E+x),~~~E \subset \mathbb{R}^m.
\]
\end{definition}

Let us define
\[
\mathcal{L}(\mathbb{R}^m)=\Big\{\mu 
\in \Omega(\mathbb{R}^m): \mu= \sum_{i=1}^k a_i \delta_{x_i},~x_i \in \mathbb{R}^m, ~a_i > 0, \sum_{i=1}^k a_i=1,~~ k \in \mathbb{N} \Big\}.
\]
Here $\delta_x$ denotes the Dirac measure supported at $x \in \mathbb{R}^m.$ It is observed that $\mathcal{L}(\mathbb{R}^m)$ is dense in $\Omega(\mathbb{R}^m)$ w.r.t. $d_L$ on $\Omega(\mathbb{R}^m),$ see \cite{Bill}. The existence of an invariant measure supported on the attractor of the associated iterated function system is shown in \cite{H}. Feng et al. \cite{Feng} studied the convolutions of equicontractive self-similar measures on $\mathbb{R}.$ 
We define essential supremum norm $\|.\|_{\mathcal{L}^{\infty}}$ of a measurable function $g$ with respect to a measure $\mu$ by 
\[
 \|g\|_{\mathcal{L}^{\infty}}= \inf \{C\ge 0 : |f(x)| \le C ~\text{for $\mu$-almost every $x$} \}.
\]
Let $\mu ,\lambda \in \Omega( \mathbb{R}^m)$ be such that $\mu$ is absolutely continuous with respect to $\lambda.$ Then by Radon-Nikodym theorem, there exists a measurable function $g: \mathbb{R}^m \to [0, \infty) $ such that $$ \mu(A)= \int_A g d\lambda,~~~\text{for any Borel set}~A \subset \mathbb{R}^m.$$
Further, the function $g$ is called the Radon-Nikodym derivative or density of $\mu$ with respect to $\lambda$, and is denoted by $\frac{ d\mu}{ d\lambda}.$
We call $\mu$ has $\mathcal{L}^{\infty}$-density if it is absolutely continuous with respect to $\lambda$, and density function $g$ satisfying
 $\|g\|_{\mathcal{L}^{\infty}} < \infty.$  
\par
We are now ready to prove the upcoming result. Here and throughout the paper, we assume that $D_r(\mathfrak)$ exists whenever it occurs.

Let $\mathcal{I}:=(\{f_i\}_{i=1}^N, \{\rho_i\}_{i=1}^N)$ be a WIFS and $\mu$ is the corresponding self-similar measure supported on self-similar set $A.$
We denote set of all infinite sequences in $N$-symbols $\Lambda:=\{1,2,\ldots,N\}$ by $\Lambda^\infty$, and  $$\Lambda^n:= \{\xi= \xi_1 \xi_2\ldots \xi_n | \xi_i = 1,2,\ldots ,N\},~~ \Lambda^*= \bigcup_{n \in \mathbb{N}} \Lambda^n.$$
Now we define a sub-IFS at some $n$th level. Let $\xi \in \Lambda^*$ then $\mathcal{I}_n:= \{f_{\eta \xi}: \eta \in {\Upsilon}^n\}$ be a sub-IFS of the IFS $\mathcal{I}$ at $n$th level, where ${\Upsilon}^n \subseteq \Lambda^n.$ Further, the sub-IFS $\mathcal{I}_n$ with the probability vector $\big\{\frac{\rho_{\eta \xi}}{\sum_{\eta \in {\Upsilon}^n}} \rho_{\eta \xi} : \eta \in {\Upsilon}^n \big\}$ is called the sub-WIFS of the WIFS $\mathcal{I}$ at $n$th level.
\begin{note}
    Throughout the paper, we will consider $\Upsilon^n \subsetneq \Lambda^n$ in the construction of sub-IFS at the $n$-th level; otherwise, the self-similar measure $\mu_n$ associated with the sub-IFS will coincide with the self-similar measure $\mu$. 
\end{note}
\section{Main Results}

\begin{theorem}\label{Lem24}
    Let $\mu$ be a self-similar measure associated with a WIFS $(\{f_i\}_{i=1}^N; \\ \rho_1 ,\rho_2,\ldots,\rho_N)$ and $\mu_n$ be the self-similar measure associated with sub-WIFS $\mathcal{I}_n$ at $n$-th level. Then, $\mu_n$ is absolutely continuous with respect to $\mu$ i.e., $\mu_n << \mu.$
\end{theorem}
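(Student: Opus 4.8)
The plan is to exploit the self-referential (invariance) identities satisfied by $\mu$ and $\mu_n$, to realise $\mu_n$ as a limit of iterates of a Hutchinson-type operator on $\Omega(\mathbb{R}^m)$, and to track absolute continuity along the iteration. Writing $g_\eta:=f_{\eta\xi}$ and $p_\eta:=\rho_{\eta\xi}/S$ with $S:=\sum_{\eta\in\Upsilon^n}\rho_{\eta\xi}>0$, the measure $\mu_n$ is the unique fixed point of
\[
T\nu \;=\; \sum_{\eta\in\Upsilon^n} p_\eta\,\nu\circ g_\eta^{-1},\qquad \nu\in\Omega(\mathbb{R}^m).
\]
Since each $g_\eta$ is a contraction, $T$ is a contraction on the complete metric space $(\Omega(\mathbb{R}^m),d_L)$, so $T^k\nu_0\to\mu_n$ for every starting measure $\nu_0$. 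On the other side, iterating the invariance identity for $\mu$ up to the common word length $m:=n+|\xi|$ gives $\mu=\sum_{\zeta\in\Lambda^{m}}\rho_\zeta\,\mu\circ f_\zeta^{-1}$, which is the comparison object I would play off against $T$.

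First I would record a single-word domination lemma: for any finite word $\zeta$, discarding all but one nonnegative term in the invariance sum yields $\mu\ge\rho_\zeta\,\mu\circ f_\zeta^{-1}$, hence $\mu\circ f_\zeta^{-1}\ll\mu$ with $\frac{d(\mu\circ f_\zeta^{-1})}{d\mu}\le\rho_\zeta^{-1}$; equivalently $\mu(E)=0\Rightarrow\mu(f_\zeta^{-1}E)=0$ for every Borel $E$. Applying this with $\zeta=\eta\xi$ shows that the class $\mathcal{A}:=\{\nu\in\Omega(\mathbb{R}^m):\nu\ll\mu\}$ is $T$-invariant: if $\nu\ll\mu$ and $\mu(E)=0$, then $\mu(g_\eta^{-1}E)=0$, so $\nu(g_\eta^{-1}E)=0$ for each $\eta$, whence $T\nu=\sum_\eta p_\eta\,\nu\circ g_\eta^{-1}\ll\mu$. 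Starting the iteration from $\nu_0=\mu\in\mathcal{A}$, every iterate $T^k\mu$ is absolutely continuous with respect to $\mu$.

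It then remains to pass absolute continuity through the limit $T^k\mu\to\mu_n$, and this is exactly where I expect the real difficulty to lie: absolute continuity is \emph{not} preserved under $d_L$-convergence in general, so $T^k\mu\ll\mu$ for all $k$ does not by itself force $\mu_n\ll\mu$. To close this gap I would try to control the Radon--Nikodym derivatives $h_k:=d(T^k\mu)/d\mu$ uniformly — establishing an equi-integrability (uniform $L^1(\mu)$, or ideally $\mathcal{L}^\infty$) bound — so that $\{h_k\}$ admits a weak limit point $h$ with $d\mu_n=h\,d\mu$; the $\mathcal{L}^\infty$-density notion recorded in the preliminaries is the natural vehicle here. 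The crux is thus quantitative rather than formal: one must rule out that the sub-IFS mass concentrates in the limit on a $\mu$-null subset of the attractor, and it is precisely the overlapping (non-separated) structure of the WIFS — which forces the cylinders $g_\eta(A)$ to carry $\mu$-mass — that should supply the required uniform bound. I would regard producing this uniform control over the $h_k$ as the single load-bearing step of the proof; without it the limiting passage, and hence the stated conclusion, is the delicate point.
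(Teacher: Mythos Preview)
Your overall plan matches the paper's proof almost exactly: both introduce the sub-WIFS Hutchinson operator (the paper calls it $\mathfrak{P}$), iterate it starting from $\mu$, and show that every iterate $\mathfrak{P}^k\mu$ vanishes on $\mu$-null sets by comparing the expansion of $\mathfrak{P}^k\mu$ against the self-similarity expansion of $\mu$ at the matching word length. Your single-word domination lemma is precisely the mechanism the paper invokes when it remarks that its Equation~\ref{eq24} is obtained from Equation~\ref{eq25} by discarding nonnegative summands.

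The divergence is at the limiting step, which you correctly isolate as the crux but leave open. You propose to close it by producing a uniform $\mathcal{L}^\infty$ bound on $h_k=d(\mathfrak{P}^k\mu)/d\mu$, yet you do not supply that bound, so as written your argument has a gap. The paper fills it differently: rather than controlling densities, it asserts that $\mathfrak{P}^k\mu\to\mu_n$ \emph{setwise} (that is, $\mathfrak{P}^k\mu(A)\to\mu_n(A)$ for every Borel $A$), appealing to an external result of Yang--Liu; once setwise convergence is in hand, $\mathfrak{P}^k\mu(A)=0$ for all $k$ immediately forces $\mu_n(A)=0$. The uniform-density route you sketch is not absent from the paper, but it appears only afterwards, in Corollary~\ref{cor24}, where the setwise convergence just obtained is combined with a claimed inequality $\mathfrak{P}^k\mu\le\mu$ to upgrade absolute continuity to $\mathcal{L}^\infty$-density. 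So to complete your version you would need either to establish that uniform bound directly or to import the setwise-convergence step the paper relies on.
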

\begin{proof}
    Recall that, for a given probability vector $\{\varsigma_{\eta}= \frac{\rho_{ \eta \xi}}{ \sum_{\eta \in \Upsilon^n} \rho_{\eta \xi}}: \eta \in \Upsilon^n\},$ the map $\mathfrak{P}: \Omega(\mathbb{R}^m) \to \Omega(\mathbb{R}^m)$ defined by $\mathfrak{P}(\lambda)= \sum_{\eta \in \Upsilon^n} \varsigma_{\eta} \lambda \circ f_{\eta}^{-1}$ is contraction mapping with respect to the Monge-Kantorovich metric and by Banach contraction theorem it has a unique fixed point $\mu_n$. Also, independent of the choice of $\lambda \in \Omega(\mathbb{R}^m)$, the sequence $\mathfrak{P}^k(\lambda)$ converges to $\mu_n$ as $k$ approaches to infinity. Now, in order to prove that $\mu << \mu,$ first we show that if $\mu(A)=0$ for Borel set $A \subset \mathbb{R}^m,$ then $\mathfrak{P}^k(\mu)(A)= 0$ for all $k \in \mathbb{N}.$\\
    Consider, 
\begin{equation}\label{eq24}
\begin{aligned}
(\mathfrak{P} \circ \mathfrak{P})(\mu) = \mathfrak{P}\bigg(\sum_{\eta \in \Upsilon^n} \varsigma_{\eta} \mu \circ f_{\eta}^{-1} \bigg) = \sum_{\eta^1, \eta^2 \in \Upsilon^n}\varsigma_{\eta^1 \eta^2} \mu \circ f_{\eta^2 \eta^1}^{-1} 
\end{aligned}
\end{equation}
    Similarly, for $k \in \mathbb{N},$ we have 
    \[  \mathfrak{P}^k(\mu)= \sum_{\eta^1,\eta^2,\ldots,\eta^k \in \Upsilon^n} \varsigma_{\eta^1\eta^2\ldots\eta^k} \mu \circ f_{\eta^k \eta^{k-1}\ldots \eta^1}^{-1}.\]
    Since $\mu$ satisfies the self-similar equation $\mu= \sum_{i \in \Lambda} \rho_i \mu \circ f_{i}^{-1},$ we have 
    \[\mu= \sum_{i,j \in \Lambda} \rho_{ij} \mu \circ f_{ji}^{-1}.\]
    In $\zeta \in \Lambda^*$ is a word of length $l \in \mathbb{N}$ in the sub-IFS, then repeating the above process $l+n+k$ times we have 
    \begin{equation}\label{eq25}
    \begin{aligned}
        \mu(A) = \sum_{i^1,i^2,\ldots,i^{l+n+k} \in \Lambda} \rho_{i^1i^2\ldots i^{l+n+k}} \mu \circ f_{i^1i^2\ldots i^{l+n+k}}^{-1}(A)=0.
         \end{aligned}
    \end{equation}
    \end{proof}  
  A direct comparison of Equations \ref{eq24} and \ref{eq25} shows that Equation \ref{eq24} can be derived from Equation \ref{eq25} by omitting certain terms in its summation. Also, utilising the fact that the composition of similarity mapping is again a similarity mapping and $\mu$ is a Borel probability measure, we are able to establish our claim.
  Next, we claim that $\mathfrak{P}^k(\mu)$ converges setwise to $\mu_n$. Setting $\lambda = \mu$ and following the notation in \cite[Theorem 1]{XYZL}, the sequence $\{\mathfrak{P}^k(\mu)\}_{k \in \mathbb{N}}$ satisfies its hypotheses. Hence, $\mathfrak{P}^k(\mu)(A) \to \mu_n(A)$ for all Borel sets $A$, completing the proof.
\begin{corollary}\label{cor24}
    Let $\mu$ be a self-similar measure associated with a WIFS $(\{f_i\}_{i=1}^N\\; \rho_1,\rho_2,\ldots,\rho_N)$ and $\mu_n$ be the sub-IFS at $n$-th level. Then, $\mu_n$ is absolutely continuous with respect to $\mu,$ i.e., $\mu_n << \mu$ with an $L_\infty$ density.
\end{corollary}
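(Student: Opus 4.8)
The plan is to upgrade the absolute continuity $\mu_n \ll \mu$ supplied by Theorem~\ref{Lem24} into a bound on the Radon--Nikodym derivative. By that theorem the density $g := \frac{d\mu_n}{d\mu}$ already exists as a nonnegative $\mu$-measurable function, so everything reduces to showing $\|g\|_{\mathcal{L}^\infty} < \infty$. I would keep the two invariance relations side by side: iterating the self-similar equation $n$ times gives $\mu = \sum_{w \in \Lambda^n}\rho_w\,\mu\circ f_w^{-1}$, while $\mu_n$ is the fixed point of $\mathfrak{P}(\lambda)=\frac{1}{S}\sum_{\eta\in\Upsilon^n}\rho_\eta\,\lambda\circ f_\eta^{-1}$, where $S=\sum_{\eta\in\Upsilon^n}\rho_\eta<1$ since $\Upsilon^n\subsetneq\Lambda^n$. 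Splitting the first relation along $\Upsilon^n$ and its complement yields the clean identity $\mathfrak{P}(\mu)=\frac{1}{S}\mu-\frac{1}{S}\nu$ with $\nu:=\sum_{w\in\Lambda^n\setminus\Upsilon^n}\rho_w\,\mu\circ f_w^{-1}\ge 0$; this is the elementary engine driving the estimates.

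To pass from measures to a pointwise bound on $g$, I would localise by differentiation of measures: for $\mu$-a.e.\ $x$ one writes $g(x)=\lim_k \mu_n(E_k)/\mu(E_k)$ along a refining family of cylinder neighbourhoods $E_k=f_w(\mathrm{supp}\,\mu)$ indexed by sub-IFS words $w\in(\Upsilon^n)^k$. The sub-WIFS invariance gives an upper estimate $\mu_n(E_k)\le \varsigma_w\,(\mathrm{const})=\rho_w S^{-k}\,(\mathrm{const})$, the constant being controlled by the separation of the sub-WIFS, which is the regime emphasised in the abstract. For the denominator one has $\mu(E_k)\ge \rho_w\,\mu(\mathrm{supp}\,\mu)=\rho_w$ from the ambient invariance, \emph{plus} the additional mass deposited on $E_k$ by all ambient words $v\in\Lambda^{kn}$ whose images overlap $E_k$. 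Bounding $g$ then amounts to showing these overlap contributions make $\mu(E_k)$ comparable to $S^{-k}\rho_w$ rather than merely to $\rho_w$.

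The main obstacle is precisely this comparison. The naive per-level inequality coming from $\mathfrak{P}(\mu)=\frac{1}{S}\mu-\frac{1}{S}\nu$ only yields $\mathfrak{P}^k(\mu)\le S^{-k}\mu$, whose bound $S^{-k}\to\infty$ degenerates in the limit and cannot by itself produce an $\mathcal{L}^\infty$ density. The genuine content is therefore a \emph{uniform} lower bound of the form $\mu(E_k)\ge c\,S^{-k}\rho_w$, valid for all $k$ and all $w\in(\Upsilon^n)^k$; this is where the overlapping structure of the ambient WIFS becomes indispensable, since it is exactly the coincidences among the many generation-$kn$ cylinders of $\mathcal{I}$ that redistribute enough $\mu$-mass onto each sub-IFS cylinder to defeat the factor $S^{-k}$. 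I would establish this through a finite-multiplicity estimate, controlling how many cylinders of $\mathcal{I}$ at generation $kn$ can cover a single $E_k$, combined with bounded distortion and the quantitative separation of the sub-WIFS. Assembling these ingredients into a single $k$-independent constant $C$ with $g\le C$ holding $\mu$-a.e.\ is the step I expect to demand the most care.
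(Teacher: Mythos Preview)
Your route diverges sharply from the paper's. The paper's argument is two lines: it asserts that $\mathfrak{P}^k(\mu)\le \mu$ for every $k$, viewing the expansion of $\mathfrak{P}^k(\mu)$ obtained in Theorem~\ref{Lem24} as a sub-sum of the iterated self-similar identity for $\mu$, and then passes to the setwise limit established there to conclude $\mu_n\le\mu$, so the constant is $M=1$. No localisation, no differentiation of measures, no cylinder estimates.

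Your observation that the naive comparison yields only $\mathfrak{P}^k(\mu)\le S^{-k}\mu$ is exactly right: the coefficients appearing in $\mathfrak{P}^k(\mu)$ are the \emph{normalised} weights $\varsigma_\eta=\rho_{\eta\xi}/S$, not the $\rho_{\eta\xi}$ themselves, so matching terms against the full expansion of $\mu$ leaves the factor $S^{-k}$ rather than $1$. You have in effect put your finger on a genuine difficulty in the paper's short argument.

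The repair you propose, however, cannot succeed in the generality of the corollary. The lower bound $\mu(E_k)\ge c\,S^{-k}\rho_w$ you aim for fails whenever the ambient IFS has no overlaps. If $(\{f_i\},\{\rho_i\})$ satisfies the strong separation condition, the ambient cylinders are pairwise disjoint and $\mu(f_w(A))=\rho_w$ exactly for every word $w$; no extra mass is ``redistributed'' onto $E_k$. Concretely, take the standard middle-thirds Cantor system with equal weights, $\xi$ the empty word, and $\Upsilon^2=\{11,22\}$: then $S=1/2$, $\mu(E_k)=4^{-k}$, $S^{-k}\rho_w=2^{-k}$, and the ratio tends to $0$. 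In fact in this example $\mu_n$ is supported on a set of $\mu$-measure zero, so $\mu_n$ is not even absolutely continuous with respect to $\mu$, let alone with bounded density. The overlap structure you invoke is therefore not a technical convenience but a necessary structural hypothesis, and it is absent from both the corollary and Theorem~\ref{Lem24}; without it no finite-multiplicity or bounded-distortion bookkeeping will close the gap.
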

\begin{proof}
    In order to prove the absolute continuity of measure $\mu_n$ with respect to measure $\mu$ with an $L_\infty$ density, it is sufficient to show that $\mu_n(A) \le M \mu(A)$ for some fixed constant $M>0$ and for all Borel measurable sets $A$. By Lemma \ref{Lem24} the sequence $\mathfrak{P}^k(\mu)$ converges setwise to $\mu_n$ and $\mathfrak{P}^k(\mu) \le \mu$ for all $k \in \mathbb{N}$. This implies that 
    $$\mu_n(A)= \lim_{k \to \infty} \mathfrak{P}^k(\mu(A)) \le \mu(A).$$
    This completes the assertion.
\end{proof}

\begin{proposition}
 Let $\mu$ be a self-similar measure associated with an IFS and $\mu_n$ be the self-similar measure corresponding to a sub-IFS at the $n$-th level. Then, $D_r(\mu_n) \le D_r(\mu)$ and $\dim_L(\mu_n) \le \dim_L(\mu).$
\end{proposition}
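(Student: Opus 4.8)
The plan is to read both inequalities off the density bound supplied by Corollary~\ref{cor24}, namely $\mu_n(A)\le M\mu(A)$ for all Borel sets $A$, or equivalently $d\mu_n/d\mu\le M$. For the quantization dimension this is essentially immediate; for the lower dimension it is only the starting point, and the genuine difficulty lies there.

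First I would dispose of the quantization inequality. Since $d\mu_n/d\mu\le M$, for every nonnegative Borel function $g$ one has $\int g\,d\mu_n\le M\int g\,d\mu$. Applying this with $g(x)=d(x,A)^r$ for a fixed set $A$ with $\text{Card}(A)\le n$, and then taking $A$ to be a (near-)optimal codebook for $\mu$, gives $V_{n,r}(\mu_n)\le M\,V_{n,r}(\mu)$. Taking logarithms, $-\log V_{n,r}(\mu_n)\ge -\log V_{n,r}(\mu)-\log M$; since $V_{n,r}(\mu)\to0$ the quantity $-\log V_{n,r}(\mu)$ tends to $+\infty$, so the additive constant $-\log M$ is washed out in the limit and
$$D_r(\mu_n)=\lim_{n\to\infty}\frac{r\log n}{-\log V_{n,r}(\mu_n)}\le\lim_{n\to\infty}\frac{r\log n}{-\log V_{n,r}(\mu)-\log M}=D_r(\mu),$$
where existence of the limits is the standing assumption of the paper.

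For the lower Assouad dimension I would again begin from $\mu_n(B(x,\rho))\le M\mu(B(x,\rho))$. Dividing by $\log\rho<0$ and letting $\rho\to0$ shows $\underline{\dim}_{\mathrm{loc}}(\mu_n,x)\ge\underline{\dim}_{\mathrm{loc}}(\mu,x)$ at every $x\in\text{supp}(\mu_n)$. To upgrade this to a genuine two-sided comparison I would use the self-similar structure: on each sub-cylinder the weight that $\mu_n$ assigns is just the renormalization $\varsigma_\eta=\rho_{\eta\xi}/\sum_{\eta'}\rho_{\eta'\xi}$ of the weight carried by $\mu$, so one expects a reverse bound $\mu(E)\le M'\mu_n(E)$ for Borel $E\subseteq\text{supp}(\mu_n)$, and hence equality of the pointwise lower dimensions on $\text{supp}(\mu_n)$. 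Granting that the sub-WIFS is separated, $\mu_n$ is a self-similar measure for a separated system, and its lower dimension is given by the explicit extremal formula $\dim_L(\mu_n)=\min_{\eta\in\Upsilon^n}\frac{\log\varsigma_\eta}{\log c_{\eta\xi}}$, with $c_{\eta\xi}$ the contraction ratio of $f_{\eta\xi}$. The plan is then to compare this minimizing ratio with the corresponding extremal ratio controlling $\dim_L(\mu)$.

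The main obstacle is precisely this comparison. Corollary~\ref{cor24} only gives the one-sided bound $\mu_n\le M\mu$, and a one-sided density bound is provably insufficient to order lower Assouad dimensions: the general inequality $\dim_L\le\inf_x\underline{\dim}_{\mathrm{loc}}$ holds in one direction only, so the local-dimension estimate above does not by itself decide the sign of $\dim_L(\mu_n)-\dim_L(\mu)$. Moreover the full WIFS is not assumed to separate, so $\mu$ need not be amenable to the clean self-similar formula, and the extremal behaviour of $\mu$ may occur at points of $\text{supp}(\mu)\setminus\text{supp}(\mu_n)$ about which the sub-system says nothing. Making the argument rigorous therefore requires (i) establishing the reverse density estimate on $\text{supp}(\mu_n)$ from the self-similar structure, and (ii) controlling $\mu$ on the complementary part of its support well enough to guarantee that discarding cylinders cannot increase the lower dimension. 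Step (ii) is where I expect the real work to be, and the elementary density manipulations play no role in it.
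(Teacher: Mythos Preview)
Your argument for $D_r(\mu_n)\le D_r(\mu)$ is correct and is exactly the paper's approach: Corollary~\ref{cor24} gives $\mu_n\ll\mu$ with bounded density, and then $V_{n,r}(\mu_n)\le M\,V_{n,r}(\mu)$ follows as you write. The paper simply cites \cite[Theorem~3.4]{VAD} for this step rather than spelling it out, but the content is the same.

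For the lower-dimension inequality your proposal is genuinely incomplete, and you correctly locate the obstruction: the one-sided bound $\mu_n\le M\mu$ controls neither direction of the ratio $\mu_n(B(x,R))/\mu_n(B(x,r))$, so it does not by itself order $\dim_L(\mu_n)$ against $\dim_L(\mu)$. However, the two-step programme you sketch---a reverse density estimate on $\text{supp}(\mu_n)$ coming from self-similarity, together with control of $\mu$ on the complementary cylinders---is not what the paper does and is more than is needed. The paper's proof again just invokes \cite[Theorem~3.4]{VAD}, which is a black-box statement that absolute continuity with $L^\infty$ density already forces both $D_r(\mu_n)\le D_r(\mu)$ and $\dim_L(\mu_n)\le\dim_L(\mu)$; no further use of the IFS structure, separation, or extremal-ratio formulas enters. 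So the missing ingredient in your proposal is precisely the $\dim_L$ half of that cited theorem (or an independent proof of it), not the self-similar analysis you outline. It is also worth noting that the paper's own written proof only states the $D_r$ conclusion explicitly and leaves the $\dim_L$ conclusion entirely to the citation.
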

\begin{proof}
In Theorem \ref{Lem24} and Corollary \ref{cor24}, we have shown that $\mu_n$ is absolutely continuous with respect to $\mu$ with an $L_{\infty}$ density. Hence, by \cite[Theorem 3.4]{VAD}, we obtain $D_r(\mu_n) \le D_r(\mu).$

\end{proof}

\begin{remark}
We have some IFSs that does not satisfy the separation properties but the sub-IFS at some level satisfy the separation properties. Let $\mathcal{I}=\{\mathbb{R}:f_1,f_2,f_3\}$ be an IFS, where $f_1(x)=\frac{x}{4},~f_2(x)= \frac{x}{4}+t,~f_3(x)=\frac{x}{4}+\frac{3}{4}$ for a fixed constant
$ \frac{1}{16} < t < \frac{4}{16} ~\text{and}~\frac{8}{16} < t < \frac{11}{16}.$

Then $\{f_{11},f_{21},f_{31}\}$ will satisfy the SSC:
\[
f_{11}([0,1])= \Big[0,\frac{1}{16}\Big];~ f_{21}([0,1])=\Big[t,t+\frac{1}{16}\Big];~ f_{31}([0,1])=\Big[\frac{3}{4},\frac{13}{16}\Big].
\]
But the IFS $\mathcal{I}$ does not satisfy the OSC (equivalently, the SOSC) (see Hochman \cite{MHOCHMAN}).
Let $\mathcal{I}=\{\mathbb{R}:f_1,f_2,f_3\}$ be an IFS, where
\[
f_1(x)=\frac{x}{2},~f_2(x)= \frac{x}{4},~f_3(x)=\frac{x}{2}+\frac{1}{2}.
\]
Then $\dim_H(A)=1=\min\{s,1\}$ but for each $\eta \in \Lambda^*$ the IFS $\{f_{\xi \eta}: \xi \in \Lambda\}$ does not satisfy the SSC.
\end{remark}

We note the following fact, which will be used in the proof of the upcoming theorems.
\begin{note}\label{mn1}
The function $\frac{x}{r+x}$ is strictly increasing function on $(0,\infty),$ for any $r>0.$ Also, it is an easy observation that the function $ x \to \sum_{i=1}^N (\rho_i s_i^r)^x$ is strictly decreasing continuous function on $(0,\infty).$
\end{note}
\begin{theorem}\label{mqd1}
Let $\{\mathbb{R}^m; f_{i},\rho_i: i \in \Lambda\}$ be a WIFS consisting of similitude with the similarity ratio $\{s_i\}_{i=1}^N.$ For some $\xi \in \Lambda^*,$ consider the sub-IFS $:= \{f_{ \eta \xi}: \eta \in \Upsilon^n \},$ satisfying the OSC for each $n \in \mathbb{N}$. Then for any $r>0,$ the quantization dimension of order $r$ of the self-similar measure $\mu = \sum_{i \in \Lambda} \rho_{i} \mu_{\rho} \circ f_{i}^{-1}$ associated with the IFS $\{f_{i},\rho_i\}_{i=1}^N$ is $\min\{\kappa_r,m\},$ where $\kappa_r$ is uniquely determined by the expression $\sum_{i \in \Lambda} (\rho_{i} s_{i}^r)^{\frac{\kappa_r}{r + \kappa_r}}= 1.$
\end{theorem}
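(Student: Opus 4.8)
The plan is to prove the matching bounds $\overline{D}_r(\mu)\le\min\{\kappa_r,m\}$ and $\underline{D}_r(\mu)\ge\min\{\kappa_r,m\}$; since $\underline{D}_r\le\overline{D}_r$ always, squeezing then forces $D_r(\mu)$ to exist and equal $\min\{\kappa_r,m\}$. The upper bound needs no separation, whereas the lower bound is where the separated sub-systems $\mathcal{I}_n$ enter.

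For the upper bound I would argue in two steps. Since $\text{supp}(\mu)$ is a compact subset of $\mathbb{R}^m$, covering it by balls of radius $\epsilon$ and using their centres as a codebook gives $V_{n,r}(\mu)\lesssim\epsilon^r$ with $n\approx\epsilon^{-\overline{\dim}_B(\text{supp}\,\mu)}$, so $\overline{D}_r(\mu)\le\overline{\dim}_B(\text{supp}\,\mu)\le m$. To obtain $\overline{D}_r(\mu)\le\kappa_r$ I would iterate the self-similar identity $\mu=\sum_i\rho_i\,\mu\circ f_i^{-1}$ down to a finite antichain of words $\omega$ whose cylinders $f_\omega(A)$ have comparable diameter, place one code point per cylinder, and estimate $\int d(x,A)^r\,d\mu\le\sum_\omega\rho_\omega|f_\omega(A)|^r$. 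Choosing the stopping scale optimally and using that $\kappa_r/(r+\kappa_r)$ is exactly the exponent solving $\sum_i(\rho_i s_i^r)^{\kappa_r/(r+\kappa_r)}=1$ yields $V_{n,r}(\mu)\lesssim n^{-r/\kappa_r}$, hence $\overline{D}_r(\mu)\le\kappa_r$. This covering estimate never uses separation, since overlapping cylinders only reduce the number of points needed.

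For the lower bound I would combine the Graf--Luschgy formula with the monotonicity already established. For each $n$ the sub-WIFS $\mathcal{I}_n$ satisfies the OSC, so by \cite{GL1} its self-similar measure $\mu_n$ has $D_r(\mu_n)=\kappa_r^{(n)}$, the unique root of $\sum_{\eta\in\Upsilon^n}(\varsigma_\eta s_{\eta\xi}^r)^{\kappa_r^{(n)}/(r+\kappa_r^{(n)})}=1$; moreover, a H\"older/packing estimate against the OSC bound $\sum_{\eta\in\Upsilon^n}s_{\eta\xi}^m\le 1$ shows $\kappa_r^{(n)}\le m$ automatically. By Corollary \ref{cor24}, $\mu_n\ll\mu$ with bounded density, so $V_{n,r}(\mu)\gtrsim V_{n,r}(\mu_n)$ and, as in the Proposition, $\underline{D}_r(\mu)\ge D_r(\mu_n)=\kappa_r^{(n)}$ for every $n$. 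It then remains to show $\sup_n\kappa_r^{(n)}=\min\{\kappa_r,m\}$. Writing $\theta_n=\kappa_r^{(n)}/(r+\kappa_r^{(n)})$ and substituting $\varsigma_\eta=\rho_\eta/C_n$, $s_{\eta\xi}=s_\eta s_\xi$ with $C_n=\sum_{\eta\in\Upsilon^n}\rho_\eta$, the defining equation rearranges to $(s_\xi^r/C_n)^{\theta_n}\sum_{\eta\in\Upsilon^n}(\rho_\eta s_\eta^r)^{\theta_n}=1$, while the full system obeys $\sum_{\eta\in\Lambda^n}(\rho_\eta s_\eta^r)^{\theta_r}=1$ for every $n$. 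When $\kappa_r\le m$, taking $\Upsilon^n$ to exhaust $\Lambda^n$ (so $C_n\to 1$) makes the bounded correction $\theta_n\,r\log s_\xi$ negligible against the $n$-fold characteristic sum, and Note \ref{mn1} (strict monotonicity and continuity of $x\mapsto\sum_i(\rho_i s_i^r)^x$ together with monotonicity of $x\mapsto x/(r+x)$) upgrades $g(\theta_n)\to 1$ to $\theta_n\to\theta_r$, i.e.\ $\kappa_r^{(n)}\to\kappa_r$; when $\kappa_r>m$ the packing bound caps each $\kappa_r^{(n)}$ at $m$ while the roots still increase to that cap, giving $\kappa_r^{(n)}\to m$. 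In either case $\sup_n\kappa_r^{(n)}=\min\{\kappa_r,m\}$, so $\underline{D}_r(\mu)\ge\min\{\kappa_r,m\}$, and combining with the upper bound finishes the proof.

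The step I expect to be the main obstacle is precisely this convergence $\sup_n\kappa_r^{(n)}=\min\{\kappa_r,m\}$. In the regime $\kappa_r\le m$ it requires selecting the OSC sub-systems $\Upsilon^n\subsetneq\Lambda^n$ so that they retain asymptotically the full characteristic weight of $\Lambda^n$ --- concretely, so that $C_n\to 1$ and $\sum_{\eta\in\Upsilon^n}(\rho_\eta s_\eta^r)^{\theta}$ stays comparable to $\big(\sum_i(\rho_i s_i^r)^\theta\big)^n$ on the exponential scale; ruling out that the discarded words carry a non-negligible exponential share of the weight is the delicate point, and the hypothesis that a separated sub-IFS exists at every level $n$ is exactly what I would invoke to guarantee such a choice. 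In the regime $\kappa_r>m$ the obstacle is instead to show the OSC-constrained roots actually rise to the geometric cap $m$. Throughout, Note \ref{mn1} is the analytic device that converts convergence of the characteristic sums into convergence of the dimensions.
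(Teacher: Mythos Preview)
Your overall architecture matches the paper: the upper bound $\overline D_r(\mu)\le\min\{\kappa_r,m\}$ is handled by the standard Graf--Luschgy covering argument (needing no separation), and the lower bound is obtained from the OSC sub-systems via Corollary~\ref{cor24} and the inequality $D_r(\mu_n)\le D_r(\mu)$ from the Proposition. Where you and the paper diverge is in how the lower bound is extracted from the family $\{\kappa_r^{(n)}\}$, and this is exactly where your argument has a real gap.

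You attempt to prove $\sup_n\kappa_r^{(n)}=\min\{\kappa_r,m\}$ directly, and to do so you write ``taking $\Upsilon^n$ to exhaust $\Lambda^n$ (so $C_n\to 1$).'' But the hypothesis does not let you \emph{choose} $\Upsilon^n$: for each $n$ a particular $\Upsilon^n\subsetneq\Lambda^n$ is given (the one making $\mathcal I_n$ satisfy the OSC), and nothing prevents $|\Upsilon^n|$ from being very small, e.g.\ uniformly bounded. In that situation $C_n=\sum_{\eta\in\Upsilon^n}\rho_\eta\to 0$ rather than $1$, and in fact $\kappa_r^{(n)}\to 0$ (since with a bounded number of maps whose ratios $s_{\eta\xi}\le s_{\max}^n s_\xi\to 0$, the root of the characteristic equation tends to $0$). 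So $\sup_n\kappa_r^{(n)}=\min\{\kappa_r,m\}$ is not a consequence of the stated hypotheses, and your hand-wave ``the hypothesis that a separated sub-IFS exists at every level $n$ is exactly what I would invoke'' does not supply the missing quantitative input.

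The paper sidesteps this entirely by arguing by contradiction rather than by convergence of the $\kappa_r^{(n)}$. Assuming $D_r(\mu)<\kappa_r$, so that $\theta_n:=\frac{\kappa_{n,r}}{r+\kappa_{n,r}}\le \frac{D_r(\mu)}{r+D_r(\mu)}<\frac{\kappa_r}{r+\kappa_r}$ for every $n$, the paper rewrites the defining equation of $\kappa_{n,r}$ as
\[
s_\xi^{-r\theta_n}\;=\;\sum_{\eta\in\Upsilon^n}(\varsigma_\eta s_\eta^r)^{\theta_n},
\]
and then chains inequalities (using Note~\ref{mn1} and $s_\eta\ge s_{\min}^n$) to bound the right-hand side below by a quantity of order $s_{\min}^{\,nr\,(\theta_\mu-\theta_r)}$, where $\theta_\mu-\theta_r<0$ is fixed. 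The left-hand side is bounded uniformly in $n$ because $\xi$ is a \emph{fixed} word and $\theta_n\in(0,1)$, whereas the right-hand side tends to $+\infty$ with $n$; this yields the contradiction. The point is that this manipulation uses only that the sub-IFS equation holds and that $\kappa_{n,r}\le D_r(\mu)$; it never needs $\Upsilon^n$ to carry any particular share of the weight of $\Lambda^n$, and so it is insensitive to how small $\Upsilon^n$ might be. Replacing your convergence step by this contradiction argument closes the gap.
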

\begin{proof}
Given that the sub-IFS $\{f_{ \eta \xi}: \eta \in \Upsilon^n\}$ satisfies the OSC, it follows from Graf-Luschgy \cite{GL1} that the quantization dimension of the self-similar measure 
$\mu_n:= \sum_{\eta \in \Upsilon^n} \varsigma_{\eta} \mu_n \circ f_{ \eta \xi}^{-1},$
where $\mu_n$ is the self-similar measure associated with the sub-IFS and the corresponding probability vector 
$\{\varsigma_{\eta}= \frac{\rho_{ \eta \xi}}{ \sum_{\theta \in \Upsilon^n} \rho_{\theta \xi}} : \eta \in \Upsilon^n\}$ 
is determined by the unique solution $\kappa_{n,r}$ of the equation
$\sum_{\eta \in \Upsilon^n} (\varsigma_{\eta} s_{\xi \eta}^r)^{\frac{\kappa_{n,r}}{r + \kappa_{n,r}}}= 1.$
As $D_r(\mu_n)$ may be sufficientely small and tending to zero as $n$ goes to large, we may assume that $\kappa_{n,r}=D_r(\mu_n) \le D_r(\mu_{\rho}).$ For contradiction, we assume that $D_r(\mu_{\rho}) < \kappa_{r}$. Then, using Note \ref{mn1} and $\sum_{\eta \in \Upsilon^n} (\varsigma_{\eta} s_{\xi \eta}^r)^{\frac{\kappa_{n,r}}{r + \kappa_{n,r}}}=1,$ we have 
\begin{align*}
s_{\xi}^{-\frac{r \kappa_{n,r}}{r + \kappa_{n,r}}} 
&= \sum_{\eta \in \Upsilon^n} (\varsigma_{\eta}  s_{ \eta}^r)^{\frac{\kappa_{n,r}}{r + \kappa_{n,r}}}  \\
&\ge \bigg(\sum_{\eta \in \Upsilon^n} \rho_{ \eta}\bigg)^{\frac{-D_r(\mu)}{r+ D_r(\mu)}}\rho_{\xi}^{\frac{D_r(\mu)}{r+ D_r(\mu)}} \sum_{\eta \in \Upsilon^n} (\rho_{ \eta}  s_{ \eta}^r)^{\frac{D_r(\mu)}{r + D_r(\mu)}} \\
& = \bigg(\sum_{\eta \in \Upsilon^n} \rho_{\xi \eta}\bigg)^{\frac{-D_r(\mu)}{r+ D_r(\mu)}}\rho_{\xi}^{\frac{D_r(\mu)}{r+ D_r(\mu)}} \sum_{\eta \in \Upsilon^n} (\rho_{ \eta}  s_{ \eta}^r)^{\frac{D_r(\mu)}{r + D_r(\mu)}}(\rho_{ \eta}   s_{ \eta}^r)^{\frac{-\kappa_{r}}{r + \kappa_{r}}} (\rho_{ \eta}  s_{ \eta}^r)^{\frac{\kappa_{r}}{r + \kappa_{r}}} \\
&= \bigg(\sum_{\eta \in \Upsilon^n} \rho_{\eta}\bigg)^{\frac{-D_r(\mu)}{r+ D_r(\mu)}}\sum_{\eta \in \Upsilon^n} (\rho_{\eta}  s_{ \eta}^r)^{\frac{D_r(\mu) }{r + D_r(\mu)}- \frac{\kappa_{r}}{r + \kappa_{r}}} (\rho_{\eta}  s_{ \eta}^r)^{\frac{\kappa_{r}}{r + \kappa_{r}}}\\
& \ge  \sum_{\eta \in \Upsilon^n} (\rho_{\eta}  s_{\min}^{nr})^{\frac{D_r(\mu) }{r + D_r(\mu)}- \frac{\kappa_{r}}{r + \kappa_{r}}} (\rho_{\eta}  s_{ \eta}^r)^{\frac{\kappa_{r}}{r + \kappa_{r}}} \\
& \ge  s_{\min}^{nr. \bigg({\frac{D_r(\mu) }{r + D_r(\mu)}- \frac{\kappa_{r}}{r + \kappa_{r}}}\bigg)}  \sum_{\eta \in \Upsilon^n} \rho_{\eta}^{\frac{D_r(\mu) }{r + D_r(\mu)}- \frac{\kappa_{r}}{r + \kappa_{r}}} (\rho_{\eta}  s_{ \eta}^r)^{\frac{\kappa_{r}}{r + \kappa_{r}}} \\ 
 & \ge s_{\min}^{nr. \bigg({\frac{D_r(\mu) }{r + D_r(\mu)}- \frac{\kappa_{r}}{r + \kappa_{r}}}\bigg)}  \sum_{\eta \in \Upsilon^n} \rho_{\eta}^{\frac{D_r(\mu) }{r + D_r(\mu)}} (s_{ \eta}^r)^{\frac{\kappa_{r}}{r + \kappa_{r}}}
\\
&  \ge  s_{\min}^{nr. \bigg({\frac{D_r(\mu) }{r + D_r(\mu)}- \frac{\kappa_{r}}{r + \kappa_{r}}}\bigg)} \sum_{\eta \in \Upsilon^n}( \rho_{\eta} s_{ \eta}^r)^{\frac{\kappa_{r}}{r + \kappa_{r}}}\\ & \ge s_{\min}^{nr. \bigg({\frac{D_r(\mu) }{r + D_r(\mu)}- \frac{\kappa_{r}}{r + \kappa_{r}}}\bigg)}.
 \end{align*}
Since $D_r(\mu)- \kappa_r < 0$ as $n$ approaches infinity, leading to a contradiction, this completes the assertion.
\end{proof}
\begin{remark}
    By \cite[Corollary 11.4]{GL1}, for $r \in [1, \infty),$ the quantization dimension function is a non-decreasing function in $r$. One can observe that this result holds for $r \in [0, \infty)$ due to the fact that for $0 \le p \le q < \infty, ~~ \|x\|_{p} \le \|x\|_{q}.$
\end{remark}
\begin{remark}
    Let $\mu$ be a self similar measure associated with an WIFS $(\{f_i\}_{i=1}^N;\\ \{\rho_i\}_{i=1}^N).$ Then, by \cite[Lemma 3.5]{GL3} $$\lim_{r \to 0^+} D_r(\mu)= D_{0}(\mu).$$
  \end{remark}
  \subsection{Approximation of measures in terms of quantization dimension}
 \begin{lemma}\label{lem10}
     Let $\mu$ and $\nu$ be two finite measures. Then 
     $$D_{0}(\mu +\nu) = \max\{D_{0}(\mu) , D_{0}(\nu)\}.$$
 \end{lemma}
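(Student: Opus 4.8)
The plan is to establish the identity first at positive order $r>0$, where the quantization error $V_{n,r}$ is non-negative and interacts cleanly with the addition of measures, and only then to descend to order zero by letting $r\to 0^+$. I note first that $D_r$ is insensitive to scaling, since $V_{n,r}(c\mu)=c\,V_{n,r}(\mu)$ for $c>0$ forces $\frac{r\log n}{-\log V_{n,r}(c\mu)}$ to have the same limit as for $\mu$; thus the fact that $\mu,\nu$ are merely finite (not probability) measures causes no difficulty, and I may work with them directly. Throughout I use the paper's standing assumption that the relevant limits defining $D_r$ exist.

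For fixed $r>0$ I would extract two estimates on the errors. Since $\mu+\nu\ge\mu$ and $\mu+\nu\ge\nu$ as measures and $d(x,A)^r\ge0$, integrating against each gives $\int d(x,A)^r\,d(\mu+\nu)\ge\int d(x,A)^r\,d\mu$ for every $A$, and taking the infimum over $\text{Card}(A)\le n$ yields the monotonicity bound $V_{n,r}(\mu+\nu)\ge\max\{V_{n,r}(\mu),V_{n,r}(\nu)\}$. In the other direction, taking near-optimal $n$-point sets $A_\mu,A_\nu$ for $\mu,\nu$ and using $d(x,A_\mu\cup A_\nu)\le\min\{d(x,A_\mu),d(x,A_\nu)\}$ with $\text{Card}(A_\mu\cup A_\nu)\le 2n$ gives the subadditivity bound $V_{2n,r}(\mu+\nu)\le V_{n,r}(\mu)+V_{n,r}(\nu)\le 2\max\{V_{n,r}(\mu),V_{n,r}(\nu)\}$.

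Writing $a_n=-\log V_{n,r}(\mu)$, $b_n=-\log V_{n,r}(\nu)$ and $c_n=-\log V_{n,r}(\mu+\nu)$ (all tending to $+\infty$), the monotonicity bound reads $c_n\le\min\{a_n,b_n\}$, so that $\frac{r\log n}{c_n}\ge\max\{\frac{r\log n}{a_n},\frac{r\log n}{b_n}\}$ and hence $D_r(\mu+\nu)\ge\max\{D_r(\mu),D_r(\nu)\}$ in the limit. The subadditivity bound reads $c_{2n}\ge\min\{a_n,b_n\}-\log2$, and because $\log(2n)\sim\log n$ the constant $\log2$ is asymptotically irrelevant, giving $D_r(\mu+\nu)\le\max\{D_r(\mu),D_r(\nu)\}$. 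Thus $D_r(\mu+\nu)=\max\{D_r(\mu),D_r(\nu)\}$ for each $r>0$, and applying the relation $\lim_{r\to0^+}D_r(\cdot)=D_0(\cdot)$ (the preceding remark, from \cite[Lemma 3.5]{GL3}) to $\mu$, $\nu$ and $\mu+\nu$, together with the continuity of $\max$, lets me pass to the limit and conclude $D_0(\mu+\nu)=\max\{D_0(\mu),D_0(\nu)\}$.

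The step I expect to be the real obstacle is this final passage to $r\to0^+$: the convergence $\lim_{r\to0^+}D_r=D_0$ is quoted above for self-similar WIFS measures, whereas here $\mu,\nu$ are arbitrary finite measures, so I would need to verify (or impose as a standing hypothesis, e.g.\ compact support together with a finite moment of some positive order) that this convergence is available for $\mu$, $\nu$ and $\mu+\nu$. A tempting alternative is to argue directly at order zero via the geometric-mean error, but this is genuinely delicate: there the relevant integrand is $\log d(x,A)$, which changes sign, so the pointwise domination underlying the monotonicity bound above breaks down and one is forced to argue through the defining infimum rather than by comparing integrands.
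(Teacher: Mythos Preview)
Your argument for each fixed $r>0$ is correct, but the descent to $r=0$ is a genuine gap, and you rightly flag it yourself. The convergence $\lim_{r\to 0^+} D_r(\cdot) = D_0(\cdot)$ is invoked in the paper only for self-similar measures associated with a WIFS (the remark citing \cite[Lemma~3.5]{GL3}); for arbitrary finite $\mu,\nu,\mu+\nu$ this interchange of limits is not established, so the positive-$r$ identity cannot be transferred to order zero on the strength of what is available here.

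The paper avoids this by arguing directly with the geometric-mean error $e_n$. For the lower bound it does not attempt any pointwise domination; instead it uses linearity, $\int\log d(x,A)\,d(\mu+\nu)=\int\log d(x,A)\,d\mu+\int\log d(x,A)\,d\nu$, together with the elementary inequality $\inf_A(F+G)\ge\inf_A F+\inf_A G$, so the sign of $\log d(x,A)$ never enters. For the upper bound the key device is the characterisation from \cite[Proposition~4.3]{GL3}: one has $t>D_0(\lambda)$ if and only if $\log n+t\log e_n(\lambda)\to+\infty$. Taking $t>\max\{D_0(\mu),D_0(\nu)\}$, the paper chooses near-optimal $n$-sets $A,B$ for $\mu$ and $\nu$, uses $A\cup B$ as a candidate set for $\mu+\nu$, and adds the two diverging quantities to force $\log n+t\log e_n(\mu+\nu)\to+\infty$, whence $D_0(\mu+\nu)\le t$. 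This is precisely your union trick, but recast in the order-zero formalism; the missing ingredient in your proposal is this \cite{GL3} characterisation of $D_0$, which sidesteps the sign issue you anticipated and makes the direct argument go through without any passage to a limit in $r$.
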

 \begin{proof}
     By the definition of the $n$th quantization error of a measure with respect to the geometric mean error, we have 
     \begin{equation}
      \begin{aligned}
          \log e_{n}(\mu+\nu) &= \inf \bigg\{ \int \log d(x,A) d(\mu+\nu)(x) : A \subset \mathbb{R}^d, \text{Card}(A) \le n\bigg\} \\& \ge 
\inf \bigg\{ \int \log d(x,A) d(\mu)(x)\bigg\} + \inf \bigg\{ \int \log d(x,A) d(\nu)(x)\bigg\} \\& = \log e_n(\mu) + \log e_{0}(\nu).
  \end{aligned}   
     \end{equation}
     This depicts that $D_{0}(\mu + \nu) \ge \max\{D_{0}(\mu), D_{0}(\nu)\}.$ Now, let $t > \max\{D_{0}(\mu), D_{0}(\nu)\},$ then by \cite[Proposition 4.3]{GL3}, we have 
     $$\lim_{n \to \infty}(\log n + t \log e_{n}(\mu)) = +\infty \quad \lim_{n \to \infty}(\log n + t \log e_{n}(\nu)) = +\infty.$$
     Let $k \in \mathbb{R}$ be any arbitrary number, then there exist natural numbers $N_1(\mu,k)$ and $N_2(\nu,k)$ such that 
     $$\log n  + t \log e_{n}(\mu) > k \quad \text{and} \quad \log n  + t \log e_{n}(\mu) > k \quad \forall n \ge N = \max\{N_1,N_2\}.$$
     Also, by the definition of the quantization error of a measure with respect to the geometric mean error, there exist sets $A(\mu,n,k)$ and $B(\nu,n,k)$ such that 
     \begin{equation*}
         \begin{aligned}
            &\log n  + t  \int \log d(x,A(\mu,n,k)) d(\mu)(x) > k, \quad \text{and} \\& \log n  + t \int \log d(x,B(\nu,n,k)) d(\nu)(x) > k \quad \forall n \ge N. 
         \end{aligned}
     \end{equation*}
  Since Card$(A(\mu,n,k) \le n)$ and Card$(B(\nu,n,k) \le n),$ it follows that Card$(A \cup B) \le n^2,$ and we have $$2 \log n + t \int \log d(x, A\cup B) d (\mu)(x) > k.$$   
  Now, for every $n > N,$ we have 
  \begin{equation*}
      \begin{aligned}
          2 \log n + t \int \log d(x, A \cup B) d(\mu + \nu)(x) & =  \log n + t \int \log d(x, A \cup B) d(\mu)(x) + \\& \log n + t \int \log d(x, A \cup B) d(\nu)(x) > k
      \end{aligned}
  \end{equation*}
     Since $k \in \mathbb{R}$ is arbitrary, we have $\lim_{n \to \infty}(\log n+ t \log e_{n}(\mu + \nu )) = +\infty$. It follows that $D_{0}(\mu + \nu) < t.$ Further, since $t > \max\{D_{0}(\mu), D_{0}(\nu)\}$ is arbitrary, it follows that $D_{0}(\mu + \nu) \le \max\{D_{0}(\mu), D_{0}(\nu)\}.$ This completes the assertion.
 \end{proof}
\begin{lemma}\label{lem09}
 Let $\mu$ be a finite Borel probability measure. Then for a fixed $\nu \in \mathcal{L}(\mathbb{R}^m),$ 
 $$D_{0}(\mu * \nu) = D_{0}(\mu).$$
\end{lemma}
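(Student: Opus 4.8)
The plan is to work directly with the geometric mean error rather than routing through Lemma~\ref{lem10}, exploiting that $\mu$, $\nu$, and hence $\mu*\nu$ are all probability measures, so the standard quantity $\log e_n(\lambda)=\inf\{\int \log d(x,A)\,d\lambda(x):\mathrm{Card}(A)\le n\}$ applies throughout. Write $\nu=\sum_{i=1}^k a_i\delta_{x_i}$ with $a_i>0$ and $\sum_{i=1}^k a_i=1$. By bilinearity of convolution, $\mu*\nu=\sum_{i=1}^k a_i(\mu*\delta_{x_i})$, and since $\mu*\delta_{x_i}$ is the translate of $\mu$ by $x_i$, the change of variables $\int g\,d(\mu*\delta_{x_i})=\int g(x+x_i)\,d\mu(x)$ holds for every nonnegative Borel $g$. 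Consequently, for any $A\subset\mathbb{R}^m$,
$$\int \log d(y,A)\,d(\mu*\nu)(y)=\sum_{i=1}^k a_i\int \log d(x,A-x_i)\,d\mu(x).$$
This identity drives both inequalities.

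For the lower bound $D_0(\mu*\nu)\ge D_0(\mu)$, I would note that each $A-x_i$ has the same cardinality as $A$, so if $\mathrm{Card}(A)\le n$ then $\int \log d(x,A-x_i)\,d\mu(x)\ge \log e_n(\mu)$ for every $i$. Weighting by $a_i$ and using $\sum_i a_i=1$ gives $\int \log d(y,A)\,d(\mu*\nu)(y)\ge \log e_n(\mu)$; taking the infimum over admissible $A$ yields $\log e_n(\mu*\nu)\ge \log e_n(\mu)$, i.e. $-\log e_n(\mu*\nu)\le -\log e_n(\mu)$. Passing to the limit in the defining ratio $\log n/(-\log e_n(\cdot))$ gives $D_0(\mu*\nu)\ge D_0(\mu)$.

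For the reverse inequality I would manufacture a good codebook for $\mu*\nu$ from one for $\mu$. Fix $m\in\mathbb{N}$, choose $B$ with $\mathrm{Card}(B)\le m$ nearly optimal for $\mu$, and set $A=\bigcup_{i=1}^k (B+x_i)$, so $\mathrm{Card}(A)\le km$. Since $A-x_i\supseteq B$ for each $i$, we have $d(x,A-x_i)\le d(x,B)$, and the identity above gives $\int \log d(y,A)\,d(\mu*\nu)(y)\le \int \log d(x,B)\,d\mu(x)$. As $A$ is admissible for $e_{km}$, letting $B$ approach optimality yields $\log e_{km}(\mu*\nu)\le \log e_m(\mu)$. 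Hence, along the subsequence $n=km$,
$$\frac{\log(km)}{-\log e_{km}(\mu*\nu)}\le \frac{\log k+\log m}{-\log e_m(\mu)}=\frac{\log k}{-\log e_m(\mu)}+\frac{\log m}{-\log e_m(\mu)}.$$
Because $-\log e_m(\mu)\to+\infty$, the first summand vanishes while the second tends to $D_0(\mu)$, so this subsequence of the ratio for $\mu*\nu$ is asymptotically bounded by $D_0(\mu)$. Since $D_0(\mu*\nu)$ is assumed to exist, its value coincides with the limit along this subsequence, giving $D_0(\mu*\nu)\le D_0(\mu)$, and combining with the previous paragraph finishes the proof.

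The step needing the most care is the reverse inequality: the constructed codebook has cardinality up to $km$ rather than $m$, and one must verify that this multiplicative blow-up enters only as the additive term $\log k$, which is negligible against $-\log e_m(\mu)\to+\infty$, and then legitimately transfer the estimate from the subsequence $n=km$ to the full limit using the standing assumption that $D_0$ exists. The translation-invariance of $\log e_n$ and the monotonicity of $e_n$ in $n$ are the only auxiliary facts required, both immediate.
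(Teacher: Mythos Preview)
Your argument is correct and complete. The paper itself does not give a detailed proof here---it simply writes ``Following a similar approach to \cite[Lemma 3.10]{VAD}, we obtain the required result''---so your explicit translation-of-codebooks argument is precisely the kind of proof being invoked, specialised to the geometric mean error. The two directions you give (bounding $\log e_n(\mu*\nu)$ below by $\log e_n(\mu)$ via $\mathrm{Card}(A-x_i)=\mathrm{Card}(A)$, and bounding $\log e_{km}(\mu*\nu)$ above by $\log e_m(\mu)$ via the union codebook $\bigcup_i(B+x_i)$) are the standard mechanism, and your handling of the additive $\log k$ term together with the paper's standing assumption that $D_0$ exists is exactly right.
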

\begin{proof}
    Following a similar approach to \cite[Lemma 3.10]{VAD}, we obtain the required result.
\end{proof}
Using the above lemmas, we will approximate the set $\Omega(\mathbb{R}^m)$ by several of its subclasses with respect to the quantization dimension.


\begin{theorem}
    The sets $\Omega_0({\mathbb{R}^m})=\{\mu \in \Omega(\mathbb{R}^m): \lim_{r \to 0^+}D_r(\mu) =D_0(\mu)= \dim_H(\mu)\}$ and $\Omega_{0>}({\mathbb{R}^m})=\{\mu \in \Omega(\mathbb{R}^m): \lim_{r \to 0^+}D_r(\mu) =D_0(\mu)>\dim_H(\mu)\}$  are dense in $\Omega(\mathbb{R}^m).$
\end{theorem}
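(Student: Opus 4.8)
The plan is to exploit the density of $\mathcal{L}(\mathbb{R}^m)$ in $(\Omega(\mathbb{R}^m),d_L)$ and to perturb an arbitrary finitely supported measure into each target class without moving it far in $d_L$. Fix $\nu=\sum_{j=1}^k a_j\delta_{x_j}\in\mathcal{L}(\mathbb{R}^m)$ and $\delta>0$; it suffices to produce elements of $\Omega_0(\mathbb{R}^m)$ and of $\Omega_{0>}(\mathbb{R}^m)$ within $d_L$-distance $\delta$ of $\nu$. For $\Omega_0$ this is essentially automatic: a finitely supported measure has $V_{n,r}=0$ for all $n\ge k$, so $D_r(\nu)=0$ for every $r\ge 0$ and $\dim_H(\nu)=0$; hence $\mathcal{L}(\mathbb{R}^m)\subseteq\Omega_0(\mathbb{R}^m)$ and $\Omega_0$ is dense at once. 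If one prefers nondegenerate representatives, one convolves $\nu$ with a small-support self-similar measure satisfying the OSC, for which $D_0=\dim_H$ by Theorem \ref{mqd1}.

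The substance is $\Omega_{0>}$. First I would fix, once and for all, a self-similar measure $\mu_*$ on $\mathbb{R}^m$ exhibiting a genuine dimension drop: an IFS that fails the separation condition yet whose sub-IFS at some level satisfies the OSC, so that Theorem \ref{mqd1} applies and gives $D_0(\mu_*)=\kappa_0$, where $\kappa_0=\lim_{r\to0^+}\kappa_r$ is the limiting value of the solution of $\sum_{i\in\Lambda}(\rho_i s_i^r)^{\kappa_r/(r+\kappa_r)}=1$, while the overlaps force $\dim_H(\mu_*)<\kappa_0$. Since both the quantization dimension and the Hausdorff dimension are invariant under homotheties, conjugating the IFS by $x\mapsto cx$ yields, for small $c$, a copy $\mu_\delta$ of $\mu_*$ with the same $D_0$ and $\dim_H$ but with support contained in a ball of radius $\le\delta$; by the Remark following Theorem \ref{mqd1} it also satisfies $\lim_{r\to0^+}D_r(\mu_\delta)=D_0(\mu_\delta)>\dim_H(\mu_\delta)$.

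Next I would set $\sigma:=\mu_\delta*\nu=\sum_{j=1}^k a_j(\mu_\delta+x_j)\in\Omega(\mathbb{R}^m)$ and check the three defining conditions of $\Omega_{0>}$. Since $\nu=\delta_0*\nu$ and convolution is nonexpansive for $d_L$, we get $d_L(\sigma,\nu)\le d_L(\mu_\delta,\delta_0)\le\int\|x\|_2\,d\mu_\delta(x)\le\delta$. By Lemma \ref{lem09}, $D_0(\sigma)=D_0(\mu_\delta)$; as $\sigma$ is a finite mixture of translates of $\mu_\delta$, its full-measure sets are finite unions of full-measure sets of the translates, so $\dim_H(\sigma)=\dim_H(\mu_\delta)$; and using that convolution with the finitely supported $\nu$ preserves the quantization dimension of every order $r>0$ (the order-$r$ analogue of Lemma \ref{lem09}, from \cite{VAD}) together with $\lim_{r\to0^+}D_r(\mu_\delta)=D_0(\mu_\delta)$, we obtain $\lim_{r\to0^+}D_r(\sigma)=D_0(\mu_\delta)=D_0(\sigma)$. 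Hence $\lim_{r\to0^+}D_r(\sigma)=D_0(\sigma)>\dim_H(\sigma)$, so $\sigma\in\Omega_{0>}(\mathbb{R}^m)$ with $d_L(\sigma,\nu)\le\delta$. Letting $\delta\to 0$ and invoking density of $\mathcal{L}(\mathbb{R}^m)$ yields density of $\Omega_{0>}(\mathbb{R}^m)$.

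The main obstacle I expect is the existence step for $\mu_*$: one must exhibit an IFS that simultaneously has a sub-IFS satisfying the OSC, so that Theorem \ref{mqd1} pins down $D_0(\mu_*)=\kappa_0<m$, and has true overlaps forcing $\dim_H(\mu_*)<\kappa_0$ (for instance an exact-overlap configuration such as $f_1f_2=f_4f_1$), and then justify the strict drop rigorously rather than merely the failure of separation. A secondary technical point is confirming that convolution with a finitely supported measure preserves $D_r$ for all $r>0$, not only for $r=0$ as in Lemma \ref{lem09}, and that $\dim_H(\mu_\delta*\nu)=\dim_H(\mu_\delta)$ holds for the adopted convention of the Hausdorff dimension of a measure.
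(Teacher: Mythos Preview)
Your overall strategy---approximate an arbitrary $\mu$ by a finitely supported $\nu\in\mathcal{L}(\mathbb{R}^m)$, then convolve $\nu$ with a small-support ``seed'' measure from the target class---is exactly the scheme the paper uses (it invokes Lemma~\ref{lem09} and the technique of \cite[Theorem~3.16]{VAD} in the same way). So the architecture of your argument is correct and matches the paper.

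Two differences are worth noting. For $\Omega_0(\mathbb{R}^m)$ your observation that $\mathcal{L}(\mathbb{R}^m)\subseteq\Omega_0(\mathbb{R}^m)$ (every finitely supported measure has $D_r=0$ for all $r\ge 0$ and $\dim_H=0$) gives density immediately, without any convolution; this is cleaner than what the paper writes, which runs the full convolution scheme for $\Omega_0$ as well and appeals to \cite[Remark~4.6]{GL3} for non-emptiness. For $\Omega_{0>}(\mathbb{R}^m)$, however, the paper does \emph{not} construct the seed measure in-house: it simply cites \cite[Theorem~4.3]{GL3} for the existence of some $\lambda_1$ with $D_0(\lambda_1)>\dim_H(\lambda_1)$ and then runs the convolution. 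Your plan to manufacture $\mu_*$ via Theorem~\ref{mqd1} together with a genuine dimension drop from overlaps is more ambitious, and the obstacle you flag is real: Theorem~\ref{mqd1} pins down $D_r(\mu_*)=\kappa_r$ once a sub-IFS satisfies the OSC, but nothing in the paper proves $\dim_H(\mu_*)<\kappa_0$ for the exhibited examples (failure of OSC, even via Hochman's criterion, does not by itself force a strict drop of the \emph{measure}'s Hausdorff dimension below $\kappa_0$). An exact-overlap example would work, but you would then have to check that such a system still admits, for every $n$, a sub-IFS at level $n$ satisfying the OSC as Theorem~\ref{mqd1} requires. The paper avoids all of this by outsourcing the existence to \cite{GL3}; if you adopt that shortcut, your remaining steps (scaling to small support, Lemma~\ref{lem09} and its $r>0$ analogue from \cite{VAD}, and the elementary identity $\dim_H(\mu_\delta*\nu)=\dim_H(\mu_\delta)$ for finitely supported $\nu$) are fine and coincide with the paper's route.
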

\begin{proof}
Since By \cite[Remark 4.6]{GL3}, the set $\Omega_0({\mathbb{R}^m})$ is non-empty. Let $\mu \in \Omega(\mathbb{R}^m)$ be any Borel probability measure and $\epsilon >0,$ then by the fact that $\mathcal{L}(\mathbb{R}^m)$ is dense in $\Omega(\mathbb{R}^m),$ there exists a measure $\nu \in \mathcal{L}(\mathbb{R}^m)$ such that
$$d_L(\mu, \nu) < \frac{\epsilon}{2}.$$ Further, by Lemma \ref{lem09} and the similar technique followed in \cite[Theorem 3.16]{VAD}, we have there exists a compactly supported measure such that $d_L(\nu, \lambda) < \frac{\epsilon}{2},$ where $\lambda = \nu * \lambda_1$ for some $\lambda_1 \in \Omega_0({\mathbb{R}^m}).$ in this way, we have $$d_L(\mu, \lambda_1) \le d_L(\mu,\nu)+ d_L(\nu,\lambda) < \epsilon.$$
This proves that the set $\Omega_0({\mathbb{R}^m})$ is a dense subset of $\Omega(\mathbb{R}^m).$
Again, by \cite[Theorem 4.3]{GL3} the set $\Omega_{0>}({\mathbb{R}^m})$ is non-empty. Since the proof is similar, we omit it.

\end{proof}

We introduce the total variation metric (\cite{Parth}) on $\Omega( \mathbb{R}^m)$ as follows:
\[
\|\mu - \nu\|_{TV}=\sup\{|\mu(A)- \nu (A)|: A ~\text{is a Borel set}\}.
\]

\begin{lemma}\label{lemcon1}
            
Let $\lambda \in \Omega( \mathbb{R}^m)$ be a fixed measure and $\big(\mu_n\big)$ be a sequence of absolutely continuous (with respect to $\lambda$) measures in $\Omega( \mathbb{R}^m).$ If $(\frac{d\mu_n}{d\lambda})$ converges in $\mathcal{L}^1(\lambda)$-norm then $\big(\mu_n\big)$ converges to a measure $\mu$, and
$$
\frac{d\mu}{d\lambda}=\lim_{n \to \infty }\frac{d\mu_n}{d\lambda}.
$$
\end{lemma}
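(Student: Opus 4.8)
The plan is to construct the limiting measure directly from the $\mathcal{L}^1$-limit of the densities, and then to show that convergence of the measures is controlled by the $\mathcal{L}^1$-norm of the difference of densities. Write $g_n = \frac{d\mu_n}{d\lambda}$, and let $g \in \mathcal{L}^1(\lambda)$ denote the $\mathcal{L}^1$-limit guaranteed by the hypothesis. First I would check that the natural candidate $\mu(A) := \int_A g \, d\lambda$ is a genuine Borel probability measure that is absolutely continuous with respect to $\lambda$. Non-negativity of $g$ (for $\lambda$-a.e.\ $x$) follows by passing to a subsequence of $(g_n)$ that converges $\lambda$-a.e.\ to $g$: such a subsequence exists because $\mathcal{L}^1$-convergence implies convergence in measure, hence a.e.\ convergence along a subsequence, and each $g_n \ge 0$ for $\lambda$-a.e.\ $x$. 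The total mass is computed by
$$
\int g \, d\lambda = \lim_{n \to \infty} \int g_n \, d\lambda = \lim_{n \to \infty} \mu_n(\mathbb{R}^m) = 1,
$$
where the interchange of limit and integral is justified by $\big|\int g_n \, d\lambda - \int g \, d\lambda\big| \le \|g_n - g\|_{\mathcal{L}^1(\lambda)} \to 0$. Countable additivity and absolute continuity of $\mu$ with respect to $\lambda$ are immediate from this integral representation, so $\mu \in \Omega(\mathbb{R}^m)$ is well defined.

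By construction $\frac{d\mu}{d\lambda} = g = \lim_{n \to \infty} \frac{d\mu_n}{d\lambda}$, which settles the second assertion. For the convergence $\mu_n \to \mu$, I would use the total variation metric introduced just above: for any Borel set $A$,
$$
|\mu_n(A) - \mu(A)| = \Big| \int_A (g_n - g)\, d\lambda \Big| \le \int_{\mathbb{R}^m} |g_n - g| \, d\lambda = \|g_n - g\|_{\mathcal{L}^1(\lambda)}.
$$
Taking the supremum over all Borel sets $A$ yields $\|\mu_n - \mu\|_{TV} \le \|g_n - g\|_{\mathcal{L}^1(\lambda)} \to 0$, so $(\mu_n)$ converges to $\mu$ in total variation. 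Since total variation convergence dominates convergence in the metric $d_L$, the same conclusion holds in $(\Omega(\mathbb{R}^m), d_L)$ as well.

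The mathematical content is essentially the identification ``$\mathcal{L}^1$-convergence of densities is equivalent to total variation convergence of the corresponding measures'', so no step is a genuine obstacle. The only points requiring care are verifying that the limit $g$ produces an honest probability measure rather than a signed or sub-probability measure—handled by the a.e.\ non-negativity argument along a subsequence together with the mass computation above—and being explicit about the mode of convergence intended. I would state the result in the total variation metric and note the $d_L$ consequence, so that the lemma fits cleanly into the surrounding measure-approximation arguments.
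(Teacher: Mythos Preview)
Your argument is correct and essentially matches the paper's: both define $\mu(A)=\int_A g\,d\lambda$ for the $\mathcal{L}^1$-limit $g$ and then control the distance between $\mu_n$ and $\mu$ by $\|g_n-g\|_{\mathcal{L}^1(\lambda)}$. The only difference is organisational---you pass through total variation and then deduce the $d_L$ conclusion, whereas the paper estimates $d_L$ directly and records the total variation bound in the Remark immediately following; you are also more careful than the paper in verifying that $\mu$ is a genuine probability measure.
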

\begin{proof}
Let $g= \lim_{n \to \infty} \frac{d\mu_n}{d\lambda}.$ Then a measure $\mu$ defined by $$\mu(A)= \int_A g d\lambda, ~~ ~~~~ A \subset \mathbb{R}^n,$$
is in $ \Omega( \mathbb{R}^m).$ Further, we obtain 
\begin{equation*}
\begin{aligned}
d_L(\mu_n,\mu)& =\sup_{\te{Lip}(f)\leq 1} \Big\{\Big|\int_{\mathbb{R}^m} f d\mu_n -\int_{\mathbb{R}^m} fd\mu \Big|\Big\} \\ & = \sup_{\te{Lip}(f)\leq 1} \Big\{\Big|\int_{\mathbb{R}^m} f  \frac{d\mu_n}{d\lambda} d\lambda -\int_{\mathbb{R}^m} fgd\lambda \Big|\Big\} \\ & \le  \sup_{\te{Lip}(f)\leq 1} \Big\{\int_{\mathbb{R}^m} \Big|f  \frac{d\mu_n}{d\lambda} -fg \Big|d\lambda\Big\} \\ & \le  \sup_{\te{Lip}(f)\leq 1} \|f\|_{\infty} \Big\{\int_{\mathbb{R}^m} \Big|  \frac{d\mu_n}{d\lambda} -g \Big|d\lambda\Big\} \\ & \le   \int_{\mathbb{R}^m} \Big|  \frac{d\mu_n}{d\lambda} -g \Big|d\lambda.
\end{aligned}
\end{equation*}
Since $\frac{d\mu_n}{d\lambda} \to g$ in $\mathcal{L}^1(\lambda)$-norm, we have $d_L(\mu_n,\mu) \to 0$ as $n \to \infty,$ hence the proof.
\end{proof}
\begin{remark}
Under the hypothesis of the above theorem, we also have 
\begin{equation*}
\begin{aligned}
\|\mu_n-\mu\|_{TV}& =\sup\{|\mu_n(A)- \mu (A)|: A ~\text{is a Borel set}\}\\ & = \sup\Big\{\Big|\int_{A}   \frac{d\mu_n}{d\lambda} d\lambda -\int_{A} gd\lambda \Big| : A ~\text{is a Borel set}\Big\} \\ & \le  \sup \Big\{\int_{A} \Big|  \frac{d\mu_n}{d\lambda} -g \Big|d\lambda:  A ~\text{is a Borel set}\Big\} \\ & \le   \int_{\mathbb{R}^m} \Big|  \frac{d\mu_n}{d\lambda} -g \Big|d\lambda.
\end{aligned}
\end{equation*}
From the above, we infer that $\|\mu_n-\mu\|_{TV} \to 0$ as $n \to \infty.$
\end{remark}
\begin{theorem}\label{approxthm1}
Let  $\lambda \in \Omega( \mathbb{R}^m).$
Suppose $\mu$ is a absolutely continuous measure with respect to $\lambda$ such that $\dim_H \Big(Gr\big(\frac{d\mu}{d\lambda}\big)\Big) =\alpha$ for some $m \le \alpha \le m+1.$ Then there exists a sequence of absolutely continuous measures $(\mu_n) \in \Omega( \mathbb{R}^m)$ with the following conditions: $$\dim_H\Big(Gr\big(\frac{d\mu_n}{d\lambda}\big)\Big) =\alpha ~~\text{and}~~ \mu =\lim_{n \to \infty}\mu_n.$$  
\end{theorem}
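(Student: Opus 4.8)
The plan is to build each approximant $\mu_n$ as an absolutely continuous measure whose density is a small, carefully chosen perturbation of $g:=\frac{d\mu}{d\lambda}$, arranged so that the perturbation changes neither the Hausdorff dimension of the graph nor the limit. The convergence $\mu_n\to\mu$ is then supplied by Lemma \ref{lemcon1}: it suffices to guarantee that $\frac{d\mu_n}{d\lambda}\to g$ in $\mathcal{L}^1(\lambda)$. So the whole problem reduces to producing densities $g_n\to g$ in $\mathcal{L}^1(\lambda)$ with $\dim_H\!\big(Gr(g_n)\big)=\alpha$ for every $n$.

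The key geometric observation I would use is that for any Lipschitz function $\psi:\mathbb{R}^m\to\mathbb{R}$ the shear map $\Phi_\psi(x,y)=(x,\,y+\psi(x))$ is a bi-Lipschitz homeomorphism of $\mathbb{R}^{m+1}$, with inverse $\Phi_{-\psi}$, and it carries $Gr(g)$ onto $Gr(g+\psi)$. Because bi-Lipschitz maps preserve Hausdorff dimension, $\dim_H\!\big(Gr(g+\psi)\big)=\dim_H\!\big(Gr(g)\big)=\alpha$ for \emph{every} Lipschitz $\psi$. Thus any additive Lipschitz perturbation of $g$ automatically has graph dimension $\alpha$, and the construction only has to ensure that the perturbed density stays nonnegative, keeps total mass $1$, and tends to $0$ in $\mathcal{L}^1(\lambda)$.

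To realize this, I would first locate a set $B$ of positive $\lambda$-measure on which $g\ge c>0$; such a set exists since $\int g\,d\lambda=1>0$. On $B$ I would fix a nonzero bounded Lipschitz function $h$ supported inside $B$ and normalized so that $\int h\,d\lambda=0$ (for instance a difference of two Lipschitz bumps of equal $\lambda$-mass). Setting $\psi_n=\tfrac1n h$ and $g_n=g+\psi_n$, one checks that $g_n\ge 0$ for all large $n$ (off $B$ we have $g_n=g\ge0$, and on $B$ we have $g_n\ge c-\tfrac1n\|h\|_\infty\ge0$ once $n\ge\|h\|_\infty/c$), that $\int g_n\,d\lambda=1+\tfrac1n\int h\,d\lambda=1$, and that $\|g_n-g\|_{\mathcal{L}^1(\lambda)}=\tfrac1n\|h\|_{\mathcal{L}^1(\lambda)}\to0$. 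Passing to the tail where $g_n\ge0$ and defining $\mu_n$ by $d\mu_n=g_n\,d\lambda$, the bi-Lipschitz argument gives $\dim_H\!\big(Gr(\tfrac{d\mu_n}{d\lambda})\big)=\alpha$, while Lemma \ref{lemcon1} yields $\mu_n\to\mu$, which is the desired sequence.

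The step I expect to be the main obstacle is reconciling nonnegativity with a genuinely nonzero perturbation: one cannot perturb $g$ freely, since $g_n$ must remain a probability density, so the perturbation must be confined to a region where $g$ is bounded below away from zero, and one must verify that such a region really exists and supports a nonzero, mass-balanced Lipschitz bump. Once the perturbation is localized there, both dimension preservation and $\mathcal{L}^1$-convergence are routine; the only remaining care is that the flat part $\{g=0\}$ of the graph (of dimension $m\le\alpha$) is left untouched by the shear, so that the overall graph dimension stays exactly $\alpha$.
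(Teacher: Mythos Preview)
Your reduction to Lemma \ref{lemcon1} plus an $\mathcal{L}^1$-approximation of the density is exactly the paper's framework; the paper merely asserts that suitable $g_n$ exist (appealing to an unspecified ``analogous result'') and then invokes Lemma \ref{lemcon1}, whereas you supply a concrete construction via Lipschitz shears. The bi-Lipschitz observation $\Phi_\psi(x,y)=(x,y+\psi(x))$ is correct and is a cleaner mechanism for preserving the graph dimension than anything the paper actually spells out.

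The obstacle you flag in your last paragraph is, however, a genuine gap for general $\lambda\in\Omega(\mathbb{R}^m)$. Nothing in the hypotheses forces the level set $B=\{g\ge c\}$ to have nonempty interior: if $\lambda$ is supported on a Cantor-type set in $\mathbb{R}^m$, then $B$ (for every $c>0$) sits inside that set, and any continuous function on $\mathbb{R}^m$ vanishing off a set with empty interior is identically zero. So a nonzero Lipschitz $h$ with $\text{supp}(h)\subset B$ need not exist, and your ``difference of two Lipschitz bumps of equal $\lambda$-mass inside $B$'' cannot always be built.

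A simple repair preserves your shear idea: drop the support constraint and take any nonnegative, not identically zero Lipschitz bump $\psi\ge 0$ on $\mathbb{R}^m$. Then $g+\tfrac1n\psi\ge 0$ automatically; normalising by $c_n=\big(1+\tfrac1n\int\psi\,d\lambda\big)^{-1}$ yields probability densities $\tilde g_n=c_n\big(g+\tfrac1n\psi\big)$. Since $(x,y)\mapsto(x,c_n y)$ is bi-Lipschitz, $\dim_H Gr(\tilde g_n)=\dim_H Gr\big(g+\tfrac1n\psi\big)=\alpha$ by your shear argument, and $\|\tilde g_n-g\|_{\mathcal{L}^1(\lambda)}\le|c_n-1|+\tfrac{c_n}{n}\|\psi\|_{\mathcal{L}^1(\lambda)}\to 0$. (It is also worth noting that, as literally stated, the constant sequence $\mu_n=\mu$ already satisfies the conclusion; both you and the paper seem to intend a nontrivial approximation.)
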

\begin{proof}
In the light of an analogous result to Theorem \ref{lemcon1}, we may have a sequence of integrable (with respect to $\lambda$) functions $(g_n)$ such that $\dim_H\big(Gr(g_n)\big) =\alpha$ and  $g_n \to \frac{d\mu}{d\lambda}$ in $\mathcal{L}^1(\lambda)$-norm. 
Now, one defines a sequence of measures $(\mu_n)$ as follows $$\mu_n(A)=\int_A g_nd\lambda, ~~~~~~ A \subset \mathbb{R}^m.$$ Then, $\frac{d\mu_n}{d\lambda}=g_n$ and $\Big(\frac{d\mu_n}{d\lambda}\Big)$ converges to $\frac{d\mu}{d\lambda}.$ Lemma \ref{lemcon1} yields that the sequence $(\mu_n)$ converges to $\mu$ such that $\dim_H\Big(Gr\big(\frac{d\mu_n}{d\lambda}\big)\Big) =\alpha.$ Hence the assertion is proved. 
\end{proof}
\section*{Acknowledgements}
 The first author is supported by the SEED grant project of IIIT Allahabad. The second author is financially supported by the Ministry of Education at the IIIT Allahabad. Some parts of the paper have been presented in lectures delivered by the first author at the workshop ``Course on Exponential separation and $L^q$ dimension" organized by IIIT Allahabad on April 23, 2025.

\bibliographystyle{amsalpha}

\end{document}